 \newtheorem{theorem}{Theorem}[section]
 \newtheorem{definition}[theorem]{Definition}
 \newtheorem{pro}[theorem]{Proposition}
\title{Remark on Wolf's condition for  boundary regularity of Navier-Stokes equations}
\author{ G Seregin
  }
\affil{OxPDE, Mathematical Institute, University of Oxford, Oxford,UK}
\date{ \today}
\begin{document}
\maketitle
\abstract{We prove Wolf's regularity condition up to the boundary for solutions to the Navier-Stokes equations satisfying non-slip boundary condition.}\setcounter{equation}{0}
\section{Introduction}

The note is inspired by an interesting result by J. Wolf, see \cite{Wolf2010}.  It reads the following. Let a pair $u$ and $p$ be a suitable weak solution to the Navier-Stokes system in the parabolic cylinder
$Q(z_0,R)=B(x_0,R)\times ]t_0-R^2,t_0[$, where $B(x_0,R)$ is a ball of radius $R$ centred at point $x_0\in\mathbb R^3$. There exists a positive constant $\varepsilon$ such that if
$$\frac 1{R^2}\int\limits_{Q(z_0,R)}|u(z)|^3dz<\varepsilon$$
then $u\in L_\infty(Q(z_0,R/2))$.

At the first glance, the condition and the result are independent of pressure $p$. But it is wrong impression as one can see from an elementary example in which
$R=1$, $z_0=(0,0)$, 
$$u(x,t)=c(t)\nabla h(x),  \qquad p(x,t)=-c'(t)h(x)+\frac 12 c^2(t)|\nabla h(x)|^2,$$ 
and $h$ is a harmonic function. If there is no restriction on pressure, then the above assumption does not provide regularity. But of course there  is an assumption on the pressure that is hidden in the definition of suitable weak solution. Let me recall it.
\begin{definition}
\label{suitableweakinterior}
A suitable weak solution $u$ and $p$ to the classical Navier-Stokes system in $Q(z_0,R)$ possess the following properties:
$$u\in L_{2,\infty}(Q(z_0,R))\cap W^{1,0}_2(Q(z_0,R)),\qquad p\in L_\frac 32(Q(z_0,R));$$
$$\partial_tu+u\cdot\nabla u-\Delta u=-\nabla p,\qquad{\rm div}\,u=0$$
in $Q(z_0,R)$ in the sense of distributions; 

for a.a. $t\in ]t_0-R^2,t_0[$,
$$\int\limits_{B(x_0,R)}\varphi^2(x,t)|u(x,t)|^2dx+2\int\limits^t_{t_0-R^2}\int\limits_{B(x_0,R)}\varphi^2|\nabla u|^2dxds\leq$$
$$\leq\int\limits^t_{t_0-R^2}\int\limits_{B(x_0,R)}|u|^2(\partial_t\varphi^2+\Delta\varphi^2)+u\cdot\nabla \varphi^2(|u|^2+2p)dxds$$
for all $\varphi \in C^\infty_0(B(x_0,R)\times ]t_0-R^2,t_0+R^2[)$.
\end{definition}
As we can see, $p$ must have finite $L_\frac 32$-norm. The exponent $3/2$ is convenient but not a unique choice of a function class for the pressure. It would interesting to know how constants in the Wolf's condition depends on the pressure. For example,
the classical Caffarelli-Kohn-Nirenberg  condition, see \cite{CKN}, tells that there are two universal positive constants $\varepsilon_1$ and $c_1$ such  that
if 
$$\frac 1{R^2}\int\limits_{Q(z_0,R)}(|u(z)|^3+|p-[p]_{B(x_0,R)}|^\frac 32)dz<\varepsilon_1$$
then 
$$|u(z)|\leq \frac {c_1}R$$
for all $z=(x,t)\in Q(z_0,R/2)$.
The latter condition is also invariant with respect the natural Navier-Stokes scaling $u(x,t)\to\lambda u(\lambda x,\lambda^2t)$ and $p(x,t)\to\lambda^2 p(\lambda x,\lambda^2t)$. 

Now, we would like to reformulate Wolf's  condition in the above scale invariant style.
\begin{theorem}\label{mainresultinter} Let $u$ and $p$ be a suitable weak solution to the Navier-Stokes equations in $Q(z_0,R)$.
Given $M>0$, there exist positive numbers $\varepsilon_\star=\varepsilon_*(M)$ and $c_*=c_*(M)$ such that if two conditions
$$\frac 1{R^2}\int\limits_{Q(z_0,R)}|u|^3dxdt< \varepsilon_*(M)$$ 
and
$$\frac 1{R^2}\int\limits_{Q(z_0,R)}|p-[p]_{B(x_0,R)}|^\frac 32dxdt<M$$
hold, then  $u$ is H\"older continuous is the closure of  $Q(z_0,R/2)$. Moreover,
$$\sup\limits_{z\in Q(z_0,R/2)}|u(z)|\leq \frac {c_*(M)}R.$$
\end{theorem}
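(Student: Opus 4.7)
The plan is to reduce Theorem~\ref{mainresultinter} to the Caffarelli--Kohn--Nirenberg $\varepsilon$-regularity condition recalled in the introduction, by verifying its combined smallness hypothesis at some small scale $\theta$ around every point of $Q(z_0,R/2)$. The $u$-contribution is trivial, so the heart of the matter is to show that the scale-invariant pressure quantity decays in $\theta$; this will be arranged through a pressure decomposition whose harmonic part enjoys a Campanato estimate.

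By the scaling symmetry I would first reduce to $R=1$, $z_0=0$. For an arbitrary fixed $z_1=(x_1,t_1)\in Q(0,1/2)$ I split, for each time $t$,
\[
p(\cdot,t)=p_1(\cdot,t)+p_2(\cdot,t)\quad\text{in }B(x_1,1/4),
\]
where $p_1$ is the Newtonian potential of $\partial_i\partial_j(u_iu_j\zeta)$ for a smooth cutoff $\zeta$ equal to $1$ on $B(x_1,1/8)$ and supported in $B(x_1,1/4)$. The Calder\'on--Zygmund estimate gives $\|p_1(\cdot,t)\|_{L^{3/2}}^{3/2}\le C\|u(\cdot,t)\|_{L^3(B(x_1,1/4))}^3$, hence
\[
\int_{Q(z_1,1/4)}|p_1|^{3/2}\,dz\le C\int_{Q(z_1,1/4)}|u|^3\,dz\le C\varepsilon_*.
\]
Since $\zeta\equiv 1$ on $B(x_1,1/8)$, the remainder $p_2=p-p_1$ is harmonic in $B(x_1,1/8)$ for a.e.\ $t$.

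For the harmonic piece I would apply the standard spatial Campanato estimate slice-wise and integrate in time, obtaining, for $0<\theta\le 1/16$,
\[
\frac{1}{\theta^{2}}\int_{Q(z_1,\theta)}\bigl|p_2-[p_2]_{B(x_1,\theta)}\bigr|^{3/2}dz\le C\theta^{5/2}\int_{Q(z_1,1/8)}\bigl|p_2-[p_2]_{B(x_1,1/8)}\bigr|^{3/2}dz\le C\theta^{5/2}(M+\varepsilon_*),
\]
where the last step uses the hypothesis on $p$ and the bound on $p_1$ after absorbing the spatial means. Combining with the trivial bound $\theta^{-2}\int_{Q(z_1,\theta)}|u|^3\le\theta^{-2}\varepsilon_*$ and the scale-$\theta$ contribution from $p_1$, I arrive at
\[
\frac{1}{\theta^{2}}\int_{Q(z_1,\theta)}\bigl(|u|^3+|p-[p]_{B(x_1,\theta)}|^{3/2}\bigr)dz\le C\theta^{5/2}M+C\theta^{-2}\varepsilon_*.
\]
Choosing $\theta=\theta(M)$ so that $C\theta^{5/2}M<\varepsilon_1/2$ and then $\varepsilon_*(M)$ so that $C\theta^{-2}\varepsilon_*<\varepsilon_1/2$, the Caffarelli--Kohn--Nirenberg criterion applied at scale $\theta$ around $z_1$ yields $|u(z_1)|\le c_1/\theta=:c_*(M)$ together with H\"older continuity near $z_1$; since $z_1\in Q(0,1/2)$ was arbitrary, the theorem follows after undoing the rescaling.

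The main obstacle will be making the slice-wise Campanato decay rigorous with constants independent of $t$, and carefully tracking the mean values $[p_2]_{B(x_1,r)}$ when passing between balls of different radii and centers, since the CKN smallness condition is stated with the mean taken over the same ball as the cylinder. Once that bookkeeping is settled, the quantitative dependence $c_*(M)\sim M^{2/5}$ and $\varepsilon_*(M)\sim M^{-4/5}$ is forced by the exponent $5/2$ in the harmonic Campanato decay.
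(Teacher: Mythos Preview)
Your argument is correct: the slice-wise pressure decomposition via the Riesz-transform representation, together with the Campanato decay of the harmonic remainder, is a clean way to prove the interior statement, and the bookkeeping worries you raise are harmless (Weyl's lemma makes $p_2(\cdot,t)$ genuinely harmonic with a dimension-dependent Campanato constant, and replacing one spatial mean by another costs only a fixed multiplicative factor).

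Note, however, that the paper does not actually prove Theorem~\ref{mainresultinter}; it states the result and refers to \cite{Ser2007} for the interior case, then proves only the boundary analogue, Theorem~\ref{mainresulthalf}. If one compares your approach with the paper's proof of that boundary statement, the difference is genuine. Instead of an elliptic, time-slice splitting of $p$ alone, the paper splits the full pair $(u,p)$ by solving an initial--boundary value problem for the nonstationary Stokes system with force $-u\cdot\nabla u$; it then applies coercive $W^{2,1}_{9/8,3/2}$ Stokes estimates to the nonlinear part $(v^1,q^1)$ and local gradient estimates for the homogeneous Stokes system to the remainder $(v^2,q^2)$, obtaining $\|\nabla q^2\|_{9,3/2}$ control that plays the role your harmonic Campanato decay plays. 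Your elliptic route is shorter, uses only Calder\'on--Zygmund and the mean-value property, and gives explicit exponents for $\varepsilon_*(M)$ and $c_*(M)$; but it does not extend to the half-space, because the pressure equation $-\Delta p=\partial_i\partial_j(u_iu_j)$ carries no usable boundary condition on $\{x_3=0\}$, so the ``harmonic remainder'' construction breaks down there. This is exactly why the paper resorts to the heavier parabolic Stokes decomposition.
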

Certainly, Theorem \ref{mainresultinter} implies Wolf's condition if we let
$$\varepsilon=\varepsilon_*\Big(1+\frac 1{R^2}\int\limits_{Q(z_0,R)}|p-[p]_{B(x_0,R)}|^\frac 32dxdt\Big).$$
This type of theorems in the case of  interior regularity appeared in \cite{Ser2007}, for further developments, see, for example, \cite{Mik20010}, \cite{WZ2014}, and \cite{KRS2015}.

In the note, we shall study boundary regularity that perhaps cannot be treated by Wolf's method.

\setcounter{equation}{0}
\section{Boundary Regularity}

We shall study regularity up to a flat part of the boundary only. The following notation will be used in what follows: $B^+(x_0,R):=\{x=(x',x_3)\in \mathbb R^3:\,\,x\in B(x_0,R),\,x_{03}<x_3\}$, $B^+(r):=B^+(0,r)$, $B^+:=B^+(1)$.

\begin{definition}\label{suitableweakhalf}
A suitable weak solution $u$ and $p$ to the classical Navier-Stokes system in $Q^+(z_0,R)=B^+(x_0,R)\times ]t_0-R^2,t_0[$ possess the following properties:
$$u\in L_{2,\infty}(Q^+(z_0,R))\cap W^{1,0}_2(Q^+(z_0,R)),\qquad p\in L_\frac 32(Q^+(z_0,R));$$
$$\partial_tu+u\cdot\nabla u-\Delta u=-\nabla p,\qquad{\rm div}\,u=0$$
in $Q^+(z_0,R)$ in the sense of distributions; 
$$u(x',0,t)=0$$
for all $|x'-x'_0|<R$ and $t\in ]t_0-R^2,t_0[$, where $x'=(x_1,x_2)$;

for a.a. $t\in ]t_0-R^2,t_0[$,
$$\int\limits_{B^+(x_0,R)}\varphi^2(x,t)|u(x,t)|^2dx+2\int\limits^t_{t_0-R^2}\int\limits_{B^+(x_0,R)}\varphi^2|\nabla u|^2dxds\leq$$
$$\leq\int\limits^t_{t_0-R^2}\int\limits_{B^+(x_0,R)}|u|^2(\partial_t\varphi^2+\Delta\varphi^2)+u\cdot\nabla \varphi^2(|u|^2+2p)dxds$$
for all $\varphi \in C^\infty_0(B(x_0,R)\times ]t_0-R^2,t_0+R^2[)$.
\end{definition}

Our aim is to show the following.
\begin{theorem}\label{mainresulthalf} Let $u$ and $p$ be a suitable weak solution to the Navier-Stokes equations in $Q^+(z_0,R)$.
Given $M>0$, there exist positive numbers $\varepsilon_\star=\varepsilon_*(M)$ and $c_*=c_*(M)$ such that if two conditions
$$\frac 1{R^2}\int\limits_{Q^+(z_0,R)}|u|^3dxdt< \varepsilon_*(M) $$
and
$$\frac 1{R^2}\int\limits_{Q^+(z_0,R)}|p-[p]_{B^+(x_0,R)}|^\frac 32dxdt<M$$
hold, then  $u$ is H\"older continuous in the closure of  $Q^+(z_0,R/2)$. Moreover,
$$\sup\limits_{z\in Q^+(z_0,R/2)}|u(z)|\leq \frac {c_*(M)}R.$$
\end{theorem}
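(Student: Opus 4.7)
\section*{Proof proposal for Theorem 2.2}

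My plan is to run a Caffarelli--Kohn--Nirenberg style iteration on scale-invariant energy quantities, with the pressure handled by a boundary pressure decomposition adapted to the half-ball. After translating and dilating so that $z_0=0$ and $R=1$, I introduce the quantities
\begin{equation*}
A(r)=\sup_{t\in(-r^2,0)}\frac 1r\int_{B^+(r)}|u|^2dx,\quad E(r)=\frac 1r\int_{Q^+(r)}|\nabla u|^2dz,
\end{equation*}
\begin{equation*}
C(r)=\frac 1{r^2}\int_{Q^+(r)}|u|^3dz,\quad D(r)=\frac 1{r^2}\int_{Q^+(r)}|p-[p]_{B^+(r)}|^{3/2}dz,
\end{equation*}
each invariant under the Navier--Stokes scaling. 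The hypothesis gives $C(1)<\varepsilon_*(M)$ and $D(1)<M$.

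The first serious step is a boundary pressure splitting. For $r\le 1/2$, I would write $p=p_1+p_2$ on $B^+(r)$, where $p_1$ solves the very weak Neumann problem $\Delta p_1=-\partial_i\partial_j(u_iu_j)$ in $B^+(r)$ with $\partial_3 p_1=\partial_3\partial_j(u_3u_j)$ on the flat part (so that $p_1$ carries the nonlinearity and satisfies an estimate of the form $\|p_1\|_{L_{3/2}(B^+(r))}\lesssim \||u|^2\|_{L_{3/2}(B^+(r))}$), and $p_2=p-p_1$ is harmonic in $B^+(r)$ with vanishing normal derivative on the flat portion. The harmonic part enjoys the interior $L^{3/2}$-to-$L^q$ improvement
\begin{equation*}
\int_{Q^+(\theta r)}|p_2-[p_2]|^{3/2}dz\le C\theta^{3+3/2}\int_{Q^+(r)}|p_2-[p_2]|^{3/2}dz
\end{equation*}
for any $\theta\in(0,1/2)$, which is the standard Campanato decay for harmonic functions up to a $C^{1,1}$ boundary piece (here flat, so it is actually sharp). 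Combining with the CKN-type bound on $p_1$ gives an inequality of the form
\begin{equation*}
D(\theta r)\le c\theta^{\alpha}D(r)+c\theta^{-\beta}C(r)
\end{equation*}
for fixed positive exponents $\alpha,\beta$.

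The second step is the local energy inequality. I take $\varphi$ a standard cutoff supported in $B(x_0,r)\times(-r^2,r^2)$ and equal to one on $Q(z_0,\theta r)$ in the inequality of Definition 2.1. The pressure-velocity term is estimated via H\"older, using the mean-free form $(p-[p]_{B^+(r)})$ (which is admissible since $\nabla\varphi^2$ is supported in an annulus and $u$ is divergence-free, allowing me to insert the constant). This produces the standard chain
\begin{equation*}
A(\theta r)+E(\theta r)\le c\theta^{-2}\bigl(C(r)^{2/3}+C(r)^{2/3}D(r)^{2/3}\bigr),
\end{equation*}
and together with the multiplicative interpolation
\begin{equation*}
C(\theta r)\le c\bigl(A(\theta r)+E(\theta r)\bigr)^{3/2}
\end{equation*}
(valid in the half-ball thanks to $u|_{x_3=0}=0$ and Sobolev--Poincar\'e), I obtain a closed coupled system for the three quantities $C$, $D$, $A+E$.

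The final step is iteration. Fixing $\theta$ small enough to absorb the $\theta^\alpha$ factor, and then choosing $\varepsilon_*(M)$ sufficiently small relative to $M$ so that the $C(r)$ contribution stays negligible, the system yields, by induction on the geometric sequence $r_k=\theta^k$,
\begin{equation*}
C(r_k)+D(r_k)\le \gamma^k\bigl(C(1)+D(1)\bigr)
\end{equation*}
for some $\gamma\in(0,1)$. This Morrey-type decay at every boundary point of $Q^+(z_0,R/2)$ delivers the scale-invariant boundedness $|u(z)|\le c_*(M)/R$ via the boundary $\varepsilon$-regularity criterion (CKN up to the boundary), and H\"older continuity up to the flat boundary follows from a standard Campanato embedding argument applied to the decay of $A+E$. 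The technical heart of the proof, and the step I expect to be most delicate, is the boundary pressure decomposition together with the justification that $p_1$ can be chosen mean-free and that the harmonic remainder $p_2$ enjoys the sharp polynomial decay on half-balls; this is precisely the obstacle that prevents Wolf's pressure-free argument from extending directly, and is why one must reintroduce a bound on $D(R)$ in the hypothesis.
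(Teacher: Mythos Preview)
Your proposal has a genuine gap at exactly the point you yourself flag as delicate: the boundary pressure decomposition. You claim that after subtracting a Neumann--Poisson piece $p_1$ carrying the nonlinearity, the remainder $p_2=p-p_1$ is harmonic in $B^+(r)$ \emph{with vanishing normal derivative on the flat part}. The harmonic part is fine, but the Neumann claim is false. Taking the third component of the momentum equation and restricting to $x_3=0$ (where $u=0$, hence $\partial_t u_3=0$, $(u\cdot\nabla u)_3=0$, and tangential second derivatives of $u_3$ vanish) gives
\[
\partial_3 p\big|_{x_3=0}=\partial_3^2 u_3\big|_{x_3=0},
\]
while your choice of Neumann data for $p_1$ is $\partial_3\partial_j(u_3u_j)$, which vanishes on $x_3=0$ because $u=0$ there. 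Hence $\partial_3 p_2|_{x_3=0}=\partial_3^2 u_3|_{x_3=0}$, a quantity you have no control over (you only know $u\in W^{1,0}_2$). Without zero Neumann data you cannot even--reflect $p_2$ across the flat boundary, and the Campanato decay you invoke for $p_2$ on half--balls is unjustified. This is not a technicality: the coupling of the pressure to the viscous term through the boundary condition is precisely what distinguishes boundary regularity from interior regularity, and an elliptic pressure splitting decoupled from the velocity cannot see it.

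The paper avoids this by using a \emph{parabolic Stokes} decomposition rather than a Poisson one. One solves the linear non-stationary Stokes initial--boundary value problem $\partial_t v^1-\Delta v^1+\nabla q^1=-u\cdot\nabla u$, ${\rm div}\,v^1=0$, with zero data on a smooth domain between $B^+(4/5)$ and $B^+(5/6)$, obtaining maximal regularity $\|\partial_t v^1\|+\|\nabla^2 v^1\|+\|\nabla q^1\|\lesssim\|u\cdot\nabla u\|$ in $L_{9/8,3/2}$. The remainder $(v^2,q^2)=(u-v^1,p-q^1)$ then solves the \emph{homogeneous} linear Stokes system with $v^2|_{x_3=0}=0$, and the linear boundary Stokes estimates of \cite{Ser2000,Ser2009} yield $\|\nabla q^2\|_{9,3/2,Q^+(3/4)}\le c(M)$, independent of $\varepsilon$. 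With this high integrability of $\nabla q^2$ one bounds, for $0<r<2/3$,
\[
\frac{1}{r^2}\int_{Q^+(r)}\bigl(|u|^3+|p-[p]_{B^+(r)}|^{3/2}\bigr)\,dz\le c\Bigl(\frac{\varepsilon}{r^2}+\frac{d(\varepsilon,M)}{r^{3/2}}\Bigr)+c(M)r^2,
\]
and then the argument is \emph{one--shot}, not iterative: pick $r=r(M)$ so small that $c(M)r^2<\varepsilon_0/2$, then pick $\varepsilon=\varepsilon(M)$ so small that the first group is also below $\varepsilon_0/2$, and apply the known boundary CKN criterion (Proposition~\ref{maincondition}) at that fixed radius. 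Your iteration scheme is unnecessary, and more importantly it rests on a pressure estimate that does not hold at the boundary.
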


We start with the proof of the following auxiliary statement.

\begin{pro}\label{epsregcondonbound}
Let $u$ and $p$ be a suitable weak solution to the Navier-Stokes equations in $Q^+:=Q^+(0,1)$.
Given $M>0$, there exist positive numbers $\varepsilon=\varepsilon(M)$ and $c=c(M)$ such that if two conditions
$$\int\limits_{Q^+}|u|^3dxdt< \varepsilon(M)$$
and
$$\int\limits_{Q^+}|p-[p]_{B^+}|^\frac 32dxdt<M$$
hold, then $z=0$ is a regular point of $u$ and therefore $u$ is H\"older continuous in the closure of a parabolic vicinity of $z=0$. Moreover,
$$|u(0)|\leq c(M).$$
\end{pro}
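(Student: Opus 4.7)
The plan is to reduce Proposition~\ref{epsregcondonbound} to the known boundary analogue of the Caffarelli--Kohn--Nirenberg $\varepsilon$-regularity criterion, which ensures regularity at $z=0$ as soon as both scale-invariant densities
$$\rho^{-2}\int_{Q^+(\rho)}|u|^3\,dz\quad\text{and}\quad\rho^{-2}\int_{Q^+(\rho)}|p-[p]_{B^+(\rho)}|^{3/2}\,dz$$
fall below a universal threshold $\delta_\ast$ on some scale $\rho$. The first density is automatic: from $\int_{Q^+(1)}|u|^3\le\varepsilon(M)$ one has the bound $\varepsilon(M)/\rho^2$, which can be made small on any fixed scale by shrinking $\varepsilon(M)$. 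The delicate part is the pressure density, which is only known to satisfy $A(1)\le M$, where $A(\rho):=\rho^{-2}\int_{Q^+(\rho)}|p-[p]_{B^+(\rho)}|^{3/2}\,dz$. The core of the argument is thus a Campanato-type iteration showing that, up to an error driven by $\int|u|^3$, the quantity $A$ contracts under dyadic rescaling; only finitely many iterations, in number depending on $M$, will then be needed to push $A$ below $\delta_\ast$.

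To obtain this contraction I would localise the pressure in the Wolf--Seregin spirit, adapted to the flat non-slip boundary. For each $0<r\le 1/2$ and almost every $t$, split $p(\cdot,t)=p_r^{(1)}(\cdot,t)+p_r^{(2)}(\cdot,t)$ on $B^+(r)$, where $p_r^{(1)}$ absorbs the nonlinearity and the boundary corrections forced by the momentum equation (on $\Gamma_0:=B^+(r)\cap\{x_3=0\}$ the non-slip condition $u\equiv 0$ reduces the natural Neumann datum for the pressure to $\partial_3 p=\Delta u_3$), while $p_r^{(2)}$ is harmonic in $B^+(r)$ with zero Neumann data on $\Gamma_0$. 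Half-space Solonnikov/Calder\'on--Zygmund estimates should yield
$$\int_{Q^+(r)}|p_r^{(1)}|^{3/2}\,dz\le C\int_{Q^+(r)}|u|^3\,dz,$$
while $p_r^{(2)}$ extends by even reflection across $\Gamma_0$ to an honest harmonic function, so it satisfies the Campanato decay
$$\int_{B^+(\theta r)}|p_r^{(2)}-[p_r^{(2)}]_{B^+(\theta r)}|^{3/2}\,dx\le C\theta^{9/2}\int_{B^+(r)}|p_r^{(2)}-[p_r^{(2)}]_{B^+(r)}|^{3/2}\,dx$$
for every $\theta\in(0,1/2]$.

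Combining the two bounds, integrating in time, and using $|Q^+(\theta r)|/|Q^+(r)|=\theta^5$, I would deduce an inequality of the form
$$A(\theta r)\le C_0\theta^{5/2}A(r)+C_0\theta^{-2}\,r^{-2}\int_{Q^+(r)}|u|^3\,dz.$$
Fixing $\theta=\theta(M)$ with $C_0\theta^{5/2}\le 1/2$ and iterating on the scales $r_k=\theta^k$, a routine recursion produces
$$A(\theta^k)\le 2^{-k}M+C(\theta)\,\theta^{-2k}\varepsilon(M).$$
Choosing $k_0=k_0(M)$ so that $2^{-k_0}M<\delta_\ast/4$, and then $\varepsilon(M)$ so small that both $C(\theta)\theta^{-2k_0}\varepsilon(M)<\delta_\ast/4$ and $\theta^{-2k_0}\varepsilon(M)<\delta_\ast$ hold, brings both scale-invariant quantities below $\delta_\ast$ on the scale $r_0=\theta^{k_0}$. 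The boundary CKN $\varepsilon$-regularity then delivers H\"older continuity at $z=0$ and the bound $|u(0)|\le c_1/r_0=:c(M)$.

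The principal obstacle I anticipate is the pressure decomposition itself: unlike in the interior, one cannot simply project $p$ onto harmonic functions because the normal derivative of the pressure on $\Gamma_0$ is coupled to the tangential dynamics through $\partial_3 p=\Delta u_3$. Designing $p_r^{(1)}$ so that $p_r^{(2)}$ is genuinely harmonic with a usable boundary condition, while simultaneously keeping the $L_{3/2}$-norm of $p_r^{(1)}$ sharply controlled by $\int|u|^3$, requires a careful half-space Stokes corrector in the manner of Wolf and Seregin; this is where the non-slip boundary condition is essentially used. Once the decomposition is set up with the stated estimates, the Morrey-type iteration and the call to the boundary CKN criterion are essentially routine.
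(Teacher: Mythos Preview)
Your endgame matches the paper: push the combined quantity $\rho^{-2}\int_{Q^+(\rho)}(|u|^3+|p-[p]_{B^+(\rho)}|^{3/2})\,dz$ below the universal threshold of the boundary CKN criterion (Proposition~\ref{maincondition} in the paper) at some $\rho=\rho(M)$ and read off $|u(0)|\le c_0/\rho(M)$. The divergence is entirely in how the pressure is tamed, and here there is a genuine gap that you flag but do not close. You want a time-slice split $p=p_r^{(1)}+p_r^{(2)}$ with $p_r^{(2)}$ harmonic in $B^+(r)$, $\partial_3 p_r^{(2)}=0$ on $\Gamma_0$, and $\int_{Q^+(r)}|p_r^{(1)}|^{3/2}\le C\int_{Q^+(r)}|u|^3$. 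But on $\Gamma_0$ the pressure obeys $\partial_3 p=\Delta u_3$, so forcing $\partial_3 p_r^{(2)}=0$ means $p_r^{(1)}$ must carry a Neumann datum involving \emph{second} derivatives of $u$, which are not controlled by $\int|u|^3$. Your Calder\'on--Zygmund bound, and with it the whole Campanato iteration, thus rests on an estimate that is not known to hold at a non-slip boundary; this is precisely the obstacle you identify at the end, and it is fatal to the scheme as written.

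The paper circumvents it by decomposing at the level of the time-dependent \emph{Stokes system} rather than the pressure Poisson equation. On a smooth $\Omega$ with $B^+(4/5)\subset\Omega\subset B^+(5/6)$ it solves $\partial_t v^1-\Delta v^1+\nabla q^1=-u\cdot\nabla u$, $\mathrm{div}\,v^1=0$, with zero initial/boundary data; maximal regularity gives $\|\nabla q^1\|_{9/8,3/2}\le c\|u\cdot\nabla u\|_{9/8,3/2}\le c\,d(\varepsilon,M)$, where $d(\varepsilon,M)=\varepsilon^{2/3}+\varepsilon+\varepsilon^{1/3}M$ comes from the local energy inequality (so $M$ enters here, unlike in your clean $\int|u|^3$ bound, but still $d\to 0$ as $\varepsilon\to 0$). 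The remainder $(v^2,q^2)=(u-v^1,\,p-q^1)$ then solves the \emph{homogeneous} Stokes system in $Q^+(4/5)$ with $v^2=0$ on $\Gamma_0$, and Seregin's local boundary Stokes estimates yield the higher integrability $\|\nabla q^2\|_{9,3/2,Q^+(3/4)}\le c(M)$. Poincar\'e then gives, at any single scale $0<r<2/3$,
\[
\frac1{r^2}\int_{Q^+(r)}|p-[p]_{B^+(r)}|^{3/2}\,dz\ \le\ \frac{c}{r^{3/2}}\,d(\varepsilon,M)+c(M)\,r^2,
\]
so one chooses $r=r(M)$ with $c(M)r^2<\varepsilon_0/2$ and then $\varepsilon(M)$ to kill the first term; no iteration is needed. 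In short, at the boundary the role you assign to a ``harmonic'' pressure piece is played instead by a solution of the homogeneous Stokes system, for which local boundary regularity is available, whereas a harmonic pressure with the Neumann data you require is not.
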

\begin{proof}
 
 From the local energy inequality with a suitable choice of the cut-off function $\varphi$,  it follows that 
$$|u|^2_{2,Q^+(r)}:=\sup\limits_{-r^2<t<0}\int\limits_{B^+(r)}|u(x,t)|^2dx+\int\limits_{Q^+(r)}|\nabla u|^2dxdt\leq$$
$$\leq c(r)\Big[\Big(\int\limits_{Q^+}|u|^3dz\Big)^\frac 23+\int\limits_{Q^+}|u|^3dz  $$
$$+\Big(\int\limits_{Q^+}|u|^3dz\Big)^\frac 13\Big(\int\limits_{Q^+}|p-[p]_{B^+}|^\frac 32dz\Big)^\frac 23\Big]\leq  $$
 $$\leq c(r)d(\varepsilon,M)$$
 for any $r\in ]0,1[$, where $Q^+(r):=Q^+(0,r)$ and 
 $$d(\varepsilon,M):=\varepsilon^\frac 23+\varepsilon +\varepsilon^\frac 13M. $$
 Using standard multiplicative inequalities, one can show
 \begin{equation}\label{right-hand-side}
 \|u\cdot \nabla u\|_{\frac 98,\frac 32,Q^+(\frac 56)}\leq c|u|^2_{2,Q^+(\frac 56)}\leq cd(\varepsilon,M). \end{equation}

Next, as in  paper \cite{Ser2002}, let us pick up a domain $\Omega$ with smooth boundary such that $B^+(4/5)\subset\Omega\subset B^+(5/6)$ and consider the following initial boundary value problem
\begin{equation}\label{system1}
\partial_tv^1-\Delta v^1+\nabla q^1=-u\cdot \nabla u,\qquad {\rm div}\,v^1=0\end{equation}
in $Q_0=\Omega\times]-(5/6)^2,0[$ 
and 
\begin{equation}\label{bc1}
v^1=0
\end{equation}
 on $\partial' Q_0=(\Omega\times \{t=-(5/6)^2\})\cup(\partial \Omega\times [-(5/6)^2,0])$. For solutions to (\ref{system1}), (\ref{bc1}), the following estimate is valid:
\begin{equation}\label{coercive1}
\|\partial_tv^1\|_{\frac{9}{8},\frac 32,Q_0}+\|\nabla^2v^1\|_{\frac{9}{8},\frac 32,Q_0}+\|\nabla q^1\|_{\frac{9}{8},\frac 32,Q_0}\leq c\|u\cdot \nabla u\|_{\frac 98,\frac 32,Q_0}.\end{equation}
Letting $v^2=u-v^1$ and $q^2=p-q^1$, we observe that the above introduced functions satisfies the following relations
$$\partial_tv^2-\Delta v^2+\nabla q^2=0,\qquad {\rm div}\,v^2=0$$
in $B^+(4/5)\times ]-(4/5)^2,0[$, and 
$$v^2(x',0,t)=0$$
for $|x'|<4/5$ and $t\in ]-(4/5)^2,0[$.
According to \cite{Ser2000}  and \cite{Ser2009}, $q^2$ obeys the estimate
$$\|\nabla q^2\|_{9,\frac 32,Q^+(3/4)}\leq 
c(\|\nabla v^2\|_{\frac 32, Q^+(4/5)}+\| v^2\|_{\frac 32, Q^+(4/5)}$$
$$+\| q^2-[q^2]_{B^+(4/5)}\|_{\frac 32, Q^+(4/5)})\leq c(\|\nabla u\|_{\frac 32, Q^+(4/5)}+\| u\|_{\frac 32, Q^+(4/5)}+$$
$$+\|p-[p]_{B^+(4/5)}\|_{\frac 32, Q^+(4/5)} +\|\nabla v^1\|_{\frac 32, Q^+(4/5)}+\| v^1\|_{\frac 32, Q^+(4/5)}+$$
$$+\| q^1-[q^1]_{B^+(4/5)}\|_{\frac 32, Q^+(4/5)}).$$
Assuming $0<\varepsilon<1$, we find elementary bounds:
$$\|\nabla u\|_{\frac 32, Q^+(4/5)}+\| u\|_{\frac 32, Q^+(4/5)}\leq c|u|_{2,Q^+}\leq c(M),$$
$$\|p-[p]_{B^+(4/5)}\|_{\frac 32, Q^+(4/5)}\leq c\|p-[p]_{B^+}\|_{\frac 32, Q^+}\leq cM.$$
Next,  from (\ref{right-hand-side}),  (\ref{coercive1}), 
 and the elliptic embedding, it follows that:
$$\|\nabla v^1\|_{\frac 32, Q^+(4/5)}+\| v^1\|_{\frac 32, Q^+(4/5)}\leq c
\|\nabla^2v^1\|_{\frac{9}{8},\frac 32,Q_0}\leq c(M)$$
and 
$$\| q^1-[q^1]_{B^+(4/5)}\|_{\frac 32, Q^+(4/5)})\leq \|\nabla q^1\|_{\frac 98,\frac 32,Q^+(4/5)}\leq c(M).$$
So, finally, we find
\begin{equation}\label{pressure2}
\|\nabla q^2\|_{9,\frac 32,Q^+(3/4)}\leq c(M).\end{equation}
It is worthy to notice  that  the right hand side is independent of $\varepsilon $.

We then  have for $0<r<2/3$
$$I_\varepsilon(r):=\frac 1{r^2}\int\limits_{Q^+(r)}(|u|^3+|p-[p]_{B^+(r)}|^\frac 32)dz\leq $$
$$\leq c\frac 1{r^2}\Big(\varepsilon +
\int\limits_{Q^+(r)}(|q^1-[q^1]_{B^+(r)}|^\frac 32+|q^2-[q^2]_{B^+(r)}|^\frac 32)dz\Big).$$
By Poincare inequality,
$$\frac 1{r^2}
\int\limits_{Q^+(r)}|q^1-[q^1]_{B^+(r)}|^\frac 32dz\leq c\frac 1{r^\frac 32}
\int\limits^0_{-r^2}\Big(\int\limits_{B^+(r)}|\nabla q^1|^\frac 98dx\Big)^\frac 43dt\leq $$$$\leq c\frac 1{r^\frac 32}\|\nabla q^1\|^\frac 32_{\frac 98,\frac 32, Q_0}\leq  c\frac 1{r^\frac 32}d(\varepsilon,M) .$$
For the second part of the pressure, we have the same arguments 
plus estimate (\ref{pressure2})
$$\frac 1{r^2}
\int\limits_{Q^+(r)}|q^2-[q^2]_{B^+(r)}|^\frac 32dz\leq cr^2\int\limits^0_{-r^2}\Big(\int\limits_{B^+(r)}|\nabla q^2|^9dx\Big)^\frac 16dt\leq $$$$\leq cr^2\|\nabla q^2\|_{9,\frac 32,Q^+(3/4)}\leq c(M)r^2.$$
So, we find
\begin{equation}\label{criterii}
I_\varepsilon(r)\leq c\Big(\frac 1{r^2}\varepsilon+\frac 1{r^\frac 32}d(\varepsilon,M)\Big)+c(M)r^2
\end{equation}
for any $0<\varepsilon<1$ and for any $0<r<2/3$.

Let us state the following condition of local boundary regularity proved in \cite{Ser2002}.
\begin{pro}\label{maincondition}
Let $w$ and $\pi$ be a suitable weak solution to the Navier-Stokes system in 
$Q^+(R)$. There exist two universal positive constants $\varepsilon_0$ and $c_0$ such that
if 
$$\frac 1{R^2}\int\limits_{Q^+(R)}(|w|^3+|\pi-[\pi]_{B^+(R)}|^\frac 32)dz<\varepsilon_0,$$
then  the function $z\mapsto w(z)$ is H\"older continuous in the closure of $Q^+(R/2)$ and 
$$\sup\limits_{z\in Q^+(R/2)}|w(z)|\leq \frac {c_0}R.$$
\end{pro}
Let us select a positive number $r=r(M)<1/2$ such that
$$c(M)r^2<\frac {\varepsilon_0}2.$$
Then we can pick up $\varepsilon=\varepsilon(M)  $ such that
$$c\Big(\frac 1{r(M)^2}\varepsilon+\frac 1{r(M)^\frac 32}d(\varepsilon,M)\Big)<\frac {\varepsilon_0}2.$$
From Proposition \ref{maincondition} and from (\ref{criterii}), it follows that 
the  function $z\mapsto u(z)$ is H\"older continuous in the closure of $Q^+(r(M)/2)$ and 
$$|u(0)|\leq c(M)=2c_0/r(M).$$
\end{proof}

The scaled version of Proposition \ref{epsregcondonbound} is as follows.
\begin{pro}\label{scaledepsregcondonbound} Let $u$ and $p$ be a suitable weak solution to the Navier-Stokes equations in $Q^+(R)$.
Given $M>0$, there exist positive numbers $\varepsilon=\varepsilon(M)$ and $c=c(M)$ such that if two conditions
$$\frac 1{R^2}\int\limits_{Q^+(R)}|u|^3dxdt< \varepsilon(M) $$
and
$$\frac 1{R^2}\int\limits_{Q^+(R)}|p-[p]_{B^+(R)}|^\frac 32dxdt<M$$
hold, then $z=0$ is a regular point of $u$ and therefore $u$ is H\"older continuous in the closure of a parabolic vicinity of $z=0$. Moreover,
$$|u(0)|\leq \frac {c(M)}R.$$
\end{pro}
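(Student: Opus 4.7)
The statement is exactly Proposition \ref{epsregcondonbound} rewritten in a scale-invariant form, so the natural strategy is a pure rescaling argument: reduce to the case $R=1$ by applying the Navier--Stokes scaling, apply the already-proved Proposition \ref{epsregcondonbound}, and translate the conclusion back. No new analytic content should be required.

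Concretely, I would introduce
\begin{equation*}
u^R(y,s):=R\,u(Ry,R^2s),\qquad q^R(y,s):=R^2\,p(Ry,R^2s),
\end{equation*}
for $(y,s)\in Q^+=Q^+(0,1)$. First I would check that $(u^R,q^R)$ is a suitable weak solution to the Navier--Stokes equations in $Q^+$. The differential identities and the non-slip boundary condition on $\{y_3=0\}$ follow immediately from the chain rule. The only point that needs a line of care is the local energy inequality: one takes a test function $\varphi$ on $Q^+$ and inserts $\varphi_R(x,t):=\varphi(x/R,t/R^2)$ into the local energy inequality for $(u,p)$ on $Q^+(R)$; changing variables then gives exactly the local energy inequality for $(u^R,q^R)$ with test function $\varphi$, because every term carries a matching power of $R$ that cancels against the Jacobian $R^5$ from $dx\,dt=R^5\,dy\,ds$.

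Next, I would verify the two smallness/boundedness hypotheses of Proposition \ref{epsregcondonbound} for $(u^R,q^R)$. A direct change of variables gives
\begin{equation*}
\int_{Q^+}|u^R|^3\,dy\,ds=\frac{1}{R^2}\int_{Q^+(R)}|u|^3\,dx\,dt<\varepsilon(M),
\end{equation*}
and, because averaging commutes with the rescaling in the obvious way ($[q^R]_{B^+}(s)=R^2[p(\cdot,R^2 s)]_{B^+(R)}$),
\begin{equation*}
\int_{Q^+}\bigl|q^R-[q^R]_{B^+}\bigr|^{3/2}dy\,ds=\frac{1}{R^2}\int_{Q^+(R)}\bigl|p-[p]_{B^+(R)}\bigr|^{3/2}dx\,dt<M.
\end{equation*}
Thus Proposition \ref{epsregcondonbound} applies to $(u^R,q^R)$ with the same $\varepsilon(M)$, $c(M)$.

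Finally, the conclusion for $u^R$ gives that $u^R$ is H\"older continuous in the closure of a parabolic neighbourhood of $0$ and $|u^R(0)|\le c(M)$. Undoing the scaling, $u$ is H\"older continuous in the closure of a parabolic neighbourhood of $0$ (the neighbourhood shrinks by the factor $R$) and
\begin{equation*}
|u(0)|=\frac{|u^R(0)|}{R}\le\frac{c(M)}{R},
\end{equation*}
which is precisely the claim. The only step that is not entirely automatic is the verification of the local energy inequality under rescaling, but that is a one-line Jacobian computation; there is no real obstacle.
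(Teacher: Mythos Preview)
Your argument is correct and is exactly the approach the paper intends: the paper does not even write out a proof of this proposition, simply introducing it as ``the scaled version of Proposition~\ref{epsregcondonbound}'', so the Navier--Stokes rescaling you carry out is precisely what is being invoked.
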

Now, we wish to show the following.
\begin{pro}\label{epsregcondonbound1}
Let $u$ and $p$ be a suitable weak solution to the Navier-Stokes equations in $Q^+$. Given
 $M>0$, there exist positive numbers $\varepsilon_1=\varepsilon_1(M)$ and $c_1=c_1(M)$ such that if two conditions
$$\int\limits_{Q^+}|u|^3dxdt< \varepsilon_1(M) $$
and
$$\int\limits_{Q^+}|p-[p]_{B^+}|^\frac 32dxdt<M$$
hold, then $u$ is H\"older continuous in the closure of $Q^+(1/2)$. Moreover,
$$\sup\limits_{Q^+(1/2)}|u(z)|\leq c_1(M).$$
\end{pro}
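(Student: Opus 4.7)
The strategy is to upgrade the single-point statement of Proposition \ref{epsregcondonbound} (via its scaled form Proposition \ref{scaledepsregcondonbound}) to a statement on all of $\overline{Q^+(1/2)}$ by applying an $\varepsilon$-regularity criterion at every point $z_0=(x_0,t_0)\in\overline{Q^+(1/2)}$. The argument splits into two cases depending on the distance $x_{03}$ from the spatial base point to the flat part of $\partial Q^+$. Fix a universal $\rho\in(0,1/8)$ (e.g.\ $\rho=1/16$). If $x_{03}\geq\rho$, the closed cylinder $Q(z_0,\rho)$ sits inside $Q^+$ and we invoke the interior Theorem \ref{mainresultinter} at $z_0$ with radius $\rho$. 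If $x_{03}<\rho$, translate to the boundary projection $z_0':=(x_{01},x_{02},0,t_0)$ and apply Proposition \ref{scaledepsregcondonbound} at $z_0'$ with radius $R=4\rho$, chosen so that $|z_0-z_0'|=x_{03}<\rho<R/2$ places $z_0$ inside the half-cylinder on which the conclusion of the underlying Proposition \ref{maincondition} is valid.

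To invoke these results one has to verify the smallness of $u$ and of the pressure on the sub-cylinders, with thresholds depending only on $M$. The $u$-condition is immediate by monotonicity: for any fixed radius $R$,
\[
\frac{1}{R^2}\int\limits_{Q^+(z_0',R)}|u|^3\,dz\leq\frac{\varepsilon_1}{R^2}.
\]
For the pressure one has to swap the local mean $[p]_{B^+(x_0',R)}$ for the global mean $[p]_{B^+}$, which is handled by the elementary triangle-type bound $\|f-[f]_A\|_{L^{3/2}(A)}\leq c\,\|f-c_0\|_{L^{3/2}(A)}$ applied with $c_0=[p]_{B^+}$:
\[
\int\limits_{Q^+(z_0',R)}|p-[p]_{B^+(x_0',R)}|^{3/2}\,dz\leq c\int\limits_{Q^+}|p-[p]_{B^+}|^{3/2}\,dz\leq cM.
\]
Setting $M':=cM/R^2$, the pressure hypothesis of Proposition \ref{scaledepsregcondonbound} holds with $M$ replaced by $M'$, and choosing $\varepsilon_1(M):=R^2\min(\varepsilon(M'),\varepsilon_*(M'))$ secures the $u$-smallness hypothesis simultaneously in both cases. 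The interior sub-case is handled analogously, with $B(x_0,\rho)$ in place of $B^+(x_0',R)$.

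The conclusion of the applied $\varepsilon$-regularity gives H\"older continuity on a parabolic (half-)neighborhood of $z_0$ of size comparable to $R$ together with $|u(z_0)|\leq c(M')/R$, with parameters uniform in $z_0$ since $R$ and $\rho$ are universal. A finite-cover argument on the compact set $\overline{Q^+(1/2)}$ then combines these local statements into $\sup\limits_{Q^+(1/2)}|u|\leq c_1(M)$ and into global H\"older continuity on $\overline{Q^+(1/2)}$, by chaining the uniform local H\"older estimates inside each neighborhood and using the sup bound for pairs of points in distinct neighborhoods. The main obstacle is bookkeeping: one has to track the mean-value shift in the pressure cleanly, confirm that translation along the flat boundary preserves the suitable-weak-solution structure (so that Proposition \ref{scaledepsregcondonbound} applies verbatim at every boundary point, not only at the origin), and check that all constants depend only on $M$ and not on the base point $z_0$.
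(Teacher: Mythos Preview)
Your proposal is correct and follows the same core idea as the paper: apply the scaled single-point criterion (Proposition~\ref{scaledepsregcondonbound}) at every $z_0\in\overline{Q^+(1/2)}$, controlling the local pressure oscillation by the triangle-type inequality $\|p-[p]_{B^+(x_0,R)}\|_{3/2}\le c\,\|p-[p]_{B^+}\|_{3/2}$ and then reading off a uniform sup bound.

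The paper's own proof is terser than yours: it simply takes $R=1/2$ for all $z_0$ and does not explicitly separate near-boundary from interior points, whereas you introduce an auxiliary scale $\rho$ and split according to whether $x_{03}\ge\rho$ or $x_{03}<\rho$, invoking the interior Theorem~\ref{mainresultinter} in the first case and projecting to the flat boundary before applying Proposition~\ref{scaledepsregcondonbound} in the second. Your extra care is warranted, since Proposition~\ref{scaledepsregcondonbound} as stated requires the center to lie on the flat boundary; the paper tacitly leaves the interior points to the companion interior statement. Conceptually the two arguments are the same.
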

\begin{proof}
For $z_0=(x_0,t_0)\in \overline{Q}^+(1/2)$, we have 
$$\frac 1{(1/2)^2}\int\limits_{Q^+(z_0, 1/2)}|u|^3dz\leq 4\int\limits_{Q^+}|u|^3dz<4\varepsilon_1(M) $$
and
$$\frac 1{(1/2)^2}\int\limits_{Q^+(z_0, 1/2)}|p-[p]_{B^+(x_0)}|^\frac 32dz\leq 4c\int\limits_{Q^+}|p-[p]_{B^+}|^\frac 32dz\leq 4cM.$$
We complete the proof by letting 
$$\varepsilon_1(M)=\frac 1r\varepsilon(4cM), \qquad c_1(M)=2c(4cM).$$
\end{proof}
Now,   Theorem \ref{mainresulthalf} follows from obvious scaling and shift and from Proposition 
\ref{epsregcondonbound1}.

\end{document}